
\documentclass[a4paper,11pt]{article}
\usepackage{amsmath}
\usepackage{amsfonts}
\usepackage{amssymb}

\setcounter{MaxMatrixCols}{10}

\providecommand{\U}[1]{\protect \rule{.1in}{.1in}}
\newtheorem{theorem}{Theorem}

\newtheorem{corollary}[theorem]{Corollary}

\newtheorem{example}{Example}

\newtheorem{proposition}[theorem]{Proposition}
\newtheorem{remark}[theorem]{Remark}

\newenvironment{proof}[1][Proof]{\noindent \textbf{#1.} }{\  \rule{0.5em}{0.5em}}
\pagestyle{myheadings} \markboth{ } {}
\parskip 2mm

\include {mak}
\parindent 0.5cc
\oddsidemargin 0cm
\evensidemargin 0cm
\topmargin -0.5 cm
\textwidth 17cm
\textheight 24cm
\baselineskip=35pt

\input{tcilatex}
\begin{document}

\begin{center}
{\Large The values of the high order Bernoulli polynomials at integers}

{\Large and the r-Stirling numbers}

{\Large \  \  \ }

{\large Miloud Mihoubi\footnote[1]{{\large {\small This research is
supported by the PNR project }}8/U160/3172..} \  \ and\  \  \ Meriem Tiachachat%
\footnotemark[2]}

USTHB, Faculty of Mathematics, RECITS\ Laboratory,

PB 32 El Alia 16111 Algiers, Algeria.

{\large \footnotemark[1]}mmihoubi@usthb.dz \ {\large \footnotemark[1]}%
miloudmihoubi@gmail.com \ {\large \footnotemark[2]}tiachachatmeriem@yahoo.fr

{\large \  \  \ }
\end{center}

\noindent \textbf{Abstract. }In this paper, we exploit the $r$-Stirling
numbers of both kinds in order to give explicit formulae for the values of
the high order Bernoulli numbers and polynomials of both kinds at integers.
We give also some identities linked the $r$-Stirling numbers and binomial
coefficients.

\noindent \textbf{Keywords. }The $r$-Stirling numbers; the high order
Bernoulli polynomials; binomial coefficients.

\noindent \textbf{MSC 2010:} 11B68; 11B73; 11B83.

\section{Introduction}

The study of the higher order Bernoulli polynomials of the both kinds have
extensively used in various branches of mathematics and have extended in
various directions. For details on the higher order Bernoulli polynomials of
the first kind $B_{n}^{\left( \alpha \right) }\left( x\right) $ one can see 
\cite{sri1,sri2,sri3,tod,zha}, and, for the higher order Bernoulli
polynomials of the second kind $b_{n}^{\left( \alpha \right) }\left(
x\right) $ one can see \cite{ade,car,pra,rom}. These polynomials are defined
by the generating function to be%
\begin{eqnarray}
\underset{n\geq 0}{\sum }B_{n}^{\left( \alpha \right) }\left( x\right) \frac{%
t^{n}}{n!} &=&\left( \frac{t}{\exp \left( t\right) -1}\right) ^{\alpha }\exp
\left( xt\right) ,  \label{b1} \\
\underset{n\geq 0}{\sum }b_{n}^{\left( \alpha \right) }\left( x\right) \frac{%
t^{n}}{n!} &=&\left( \frac{t}{\ln \left( 1+t\right) }\right) ^{\alpha
}\left( 1+t\right) ^{x}.  \label{b3}
\end{eqnarray}%
The numbers $B_{n}^{\left( \alpha \right) }:=B_{n}^{\left( \alpha \right)
}\left( 0\right) $ are the high order Bernoulli numbers of the first kind
and $B_{n}:=B_{n}^{\left( 1\right) }\left( 0\right) $ are the Bernoulli
numbers of the second kind. Also, the numbers $b_{n}^{\left( \alpha \right)
}:=b_{n}^{\left( \alpha \right) }\left( 0\right) $ are the high order
Bernoulli numbers of the second kind and $b_{n}:=b_{n}^{\left( 1\right)
}\left( 0\right) $ are the Bernoulli numbers of the second kind. These
numbers and polynomials are connected to the $r$-Stirling numbers of the
first and second kind $\QATOPD[ ] {n}{k}_{r}$ and $\QATOPD \{ \} {n}{k}_{r}$
introduced by Broder \cite{bro,mez2,mih2}. \newline
Recall that the number $\QATOPD[ ] {n}{k}_{r}$ counts the number of
permutations of the set $\left[ n\right] :=\left \{ 1,\ldots ,n\right \} $
into $k$ cycles such that the elements of the set $\left[ r\right] $ are in
different cycles, and, the number $\QATOPD \{ \} {n}{k}_{r}$ counts the
number of partitions of the set $\left[ n\right] $ into $k$ non-empty
subsets such that the elements of the set $\left[ r\right] $ are in
different subsets. These numbers are determined by the generating function
to be%
\begin{eqnarray}
\underset{n\geq k}{\sum }\QATOPD[ ] {n+r}{k+r}_{r}\frac{t^{n}}{n!} &=&\frac{1%
}{k!}\frac{\left( -\ln \left( 1-t\right) \right) ^{k}}{\left( 1-t\right) ^{r}%
},  \label{b2} \\
\underset{n\geq k}{\sum }\QATOPD \{ \} {n+r}{k+r}_{r}\frac{t^{n}}{n!} &=&%
\frac{1}{k!}\left( \exp \left( t\right) -1\right) ^{k}\exp \left( rt\right) .
\label{b4}
\end{eqnarray}%
By combining (\ref{b1}) and (\ref{b4}) we obtain%
\begin{equation}
B_{n}^{\left( -k\right) }\left( r\right) =\binom{n+k}{k}^{-1}\QATOPD \{ \}
{n+r+k}{k+r}_{r},\  \  \  \ r,k\in \mathbb{N},  \label{a5}
\end{equation}%
and by combining (\ref{b3}) and (\ref{b2}) we get%
\begin{equation}
b_{n}^{\left( -k\right) }\left( -r\right) =\left( -1\right) ^{n}\binom{n+k}{k%
}^{-1}\QATOPD[ ] {n+r+k}{k+r}_{r},\  \ r,k\in \mathbb{N}.  \label{a6}
\end{equation}%
In this paper, we give formulas for the values of the high order Bernoulli
polynomials at integers in terms of the $r$-Stirling numbers of both kinds.
In particular, we may prove that the Bernoulli numbers of the first kind
admit the following representations%
\begin{eqnarray*}
b_{n} &=&\left( n+1\right) \binom{2n}{n}^{-1}\underset{j=0}{\overset{n}{\sum 
}}\frac{\left( -1\right) ^{n+j}}{j+1}\binom{2n}{n+j}\QATOPD[ ] {n+j}{j} \\
&=&n\underset{j=0}{\overset{n}{\sum }}\frac{\left( -1\right) ^{j}}{n+j}%
\binom{2n}{n+j}\QATOPD \{ \} {n+j+1}{j+1}
\end{eqnarray*}%
and the Bernoulli numbers of the second kind admit the following
representations%
\begin{eqnarray*}
B_{n} &=&n\underset{j=0}{\overset{n}{\sum }}\frac{\left( -1\right) ^{n+j}}{%
n+j}\binom{2n}{n+j}\QATOPD[ ] {n+j+1}{j+1} \\
&=&\left( n+1\right) \binom{2n}{n}^{-1}\underset{j=0}{\overset{n}{\sum }}%
\frac{\left( -1\right) ^{j}}{j+1}\binom{2n}{n+j}\QATOPD \{ \} {n+j}{j}.
\end{eqnarray*}%
As consequences, we give in the third section some identities linked $r$%
-Stirling numbers and binomial coefficients. The mathematical tools used are
the identities (\ref{a5}), (\ref{a6}) and the Melzak's formula \cite%
{mel,mel1} given by%
\begin{equation}
f_{n}\left( \alpha +x\right) =\alpha \binom{\alpha +p}{p}\underset{j=0}{%
\overset{p}{\sum }}\frac{\left( -1\right) ^{j}}{\alpha +j}\binom{p}{j}%
f_{n}\left( -j+x\right) .,  \label{3}
\end{equation}%
where $f$ is a polynomial of degree $n\leq p,$ $\binom{x}{k}:=\frac{x\left(
x-1\right) \cdots \left( x-k+1\right) }{k!},\ k\geq 1,$ and $\binom{x}{0}%
:=1. $\newline
We use also the notation \newline
$x^{\underline{n}}=x\left( x-1\right) \cdots \left( x-n+1\right) ,$ $n\geq
1, $ $x^{\overline{0}}=1$ \ and \ $x^{\overline{n}}=x\left( x+1\right)
\cdots \left( x+n-1\right) ,$ $n\geq 1,$ $x^{\overline{0}}=1.$

\section{The values of the high order Bernoulli polynomials at integers}

For such applications of (\ref{3}), we consider the Bernoulli polynomials of
both kinds. Indeed, the definitions (\ref{b1}) and (\ref{b3}) show that $%
B_{n}^{\left( \alpha \right) }\left( 0\right) $ and $b_{n}^{\left( \alpha
\right) }\left( 0\right) $ represent (potential) polynomials in $\alpha $ of
degree $\leq n,$ see \cite[Thm. B, p. 141]{com}. So, the polynomials $%
b_{n}^{\left( \alpha \right) }\left( x\right) $ and $B_{n}^{\left( \alpha
\right) }\left( x\right) $ are also polynomials in $\alpha $ of degree $\leq
n.$\ This help to give new formulas for the high order Bernoulli polynomials
in terms of the $r$-Stirling numbers. The following proposition gives
formulas for the values of the high order Bernoulli polynomials of both
kinds at non-positive integers in terms of the $r$-Stirling numbers of the
first kind.

\begin{proposition}
\label{P1}Let $\alpha $ be a real number and $p,$ $q,$ $r,$ $n$ be
non-negative integers with $p\geq n.$ We have%
\begin{eqnarray*}
b_{n}^{\left( \alpha \right) }\left( -r\right) &=&\left( \alpha +q\right) 
\binom{\alpha +p+q}{p}\underset{j=0}{\overset{p}{\sum }}\frac{\left(
-1\right) ^{n+j}}{\alpha +q+j}\binom{p}{j}\frac{\QATOPD[ ] {n+r+j+q}{r+j+q}%
_{r}}{\binom{n+j+q}{n}}, \\
&&\  \  \  \  \\
B_{n}^{\left( \alpha \right) }\left( -r\right) &=&\left( n+1-\alpha
+q\right) \binom{n+1-\alpha +p+q}{p}\underset{j=0}{\overset{p}{\sum }}\frac{%
\left( -1\right) ^{n+j}}{n+1-\alpha +q+j}\binom{p}{j}\frac{\QATOPD[ ] {%
n+r+j+q+1}{r+j+q+1}_{r+1}}{\binom{n+j+q}{n}}.
\end{eqnarray*}
\end{proposition}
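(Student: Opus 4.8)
The plan is to obtain both identities from a single use of Melzak's formula (\ref{3}), applied to the polynomial in the \emph{order} parameter, reading off the special values from (\ref{a6}); the passage from $b_{n}^{(\alpha)}$ to $B_{n}^{(\alpha)}$ is then handled by one change of variable in the generating functions. Since, by the remark preceding the statement, $\beta\mapsto b_{n}^{(\beta)}(-r)$ is a polynomial in $\beta$ of degree at most $n\leq p$, Melzak's formula (\ref{3}) applies to $f_{n}(\beta):=b_{n}^{(\beta)}(-r)$.

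\emph{The formula for $b_{n}^{(\alpha)}(-r)$.} In (\ref{3}) I would take the ``$\alpha$'' to be $\alpha+q$ and the ``$x$'' to be $-q$. Then the left-hand side is $f_{n}(\alpha)=b_{n}^{(\alpha)}(-r)$, while on the right-hand side the arguments are $-j+(-q)=-(j+q)$, which are non-positive integers, so (\ref{a6}) gives $b_{n}^{(-(j+q))}(-r)=(-1)^{n}\binom{n+j+q}{n}^{-1}\QATOPD[ ]{n+r+j+q}{r+j+q}_{r}$ after using $\binom{n+j+q}{j+q}=\binom{n+j+q}{n}$. Substituting this into (\ref{3}), and noting that the external factor is $(\alpha+q)\binom{(\alpha+q)+p}{p}=(\alpha+q)\binom{\alpha+p+q}{p}$ with denominators $\alpha+q+j$, reproduces the first displayed identity verbatim (the sign $(-1)^{n+j}$ being the product of the $(-1)^{j}$ of Melzak and the $(-1)^{n}$ of (\ref{a6})).

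\emph{The bridge identity.} Next I would prove
\[
B_{n}^{(\alpha)}(x)=b_{n}^{(n+1-\alpha)}(x-1)\qquad\text{for every real }\alpha .
\]
To this end, extract the coefficient of $t^{n}$ in (\ref{b1}) and change the variable by $t=\ln(1+u)$ (equivalently, apply Lagrange--B\"urmann inversion to $u=e^{t}-1=t\cdot\frac{e^{t}-1}{t}$). Since $e^{\ln(1+u)}-1=u$, the factor $\left(\frac{t}{e^{t}-1}\right)^{\alpha}e^{xt}$ becomes $\left(\frac{\ln(1+u)}{u}\right)^{\alpha}(1+u)^{x}$, and the substitution contributes the extra factor $\left(\frac{u}{\ln(1+u)}\right)^{n+1}(1+u)^{-1}$; the two combine to $\left(\frac{u}{\ln(1+u)}\right)^{n+1-\alpha}(1+u)^{x-1}$, whose coefficient of $u^{n}$ is $b_{n}^{(n+1-\alpha)}(x-1)/n!$ by (\ref{b3}). (Alternatively one may note that both sides are polynomials in $\alpha$ of degree $\leq n$ and check $n+1$ values, or simply invoke this as the classical relation between the N\"orlund polynomials of the two kinds.)

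\emph{The formula for $B_{n}^{(\alpha)}(-r)$.} Finally, apply the first identity with $\alpha$ replaced by $n+1-\alpha$ and $r$ replaced by $r+1$ (legitimate, as $n+1-\alpha$ is real and $r+1\in\mathbb{N}$): its left-hand side becomes $b_{n}^{(n+1-\alpha)}(-(r+1))=B_{n}^{(\alpha)}(-r)$ by the bridge identity, the external factor becomes $(n+1-\alpha+q)\binom{n+1-\alpha+p+q}{p}$ with denominators $n+1-\alpha+q+j$, and $\QATOPD[ ]{n+(r+1)+j+q}{(r+1)+j+q}_{r+1}=\QATOPD[ ]{n+r+j+q+1}{r+j+q+1}_{r+1}$; this is precisely the second displayed identity. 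The only genuinely non-routine step is the bridge identity $B_{n}^{(\alpha)}(x)=b_{n}^{(n+1-\alpha)}(x-1)$, i.e. the coefficient-extraction/change-of-variable argument linking (\ref{b1}) and (\ref{b3}); everything else is index and sign bookkeeping together with a direct transcription of (\ref{3}) and (\ref{a6}).
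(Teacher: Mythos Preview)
Your proof is correct and follows essentially the same route as the paper: apply Melzak's formula (\ref{3}) with the shift $\alpha\mapsto\alpha+q$, $x=-q$ to the polynomial $\beta\mapsto b_{n}^{(\beta)}(-r)$, evaluate the resulting special values via (\ref{a6}), and then deduce the $B_{n}^{(\alpha)}$ formula from the $b_{n}^{(\alpha)}$ formula by the Carlitz identity $B_{n}^{(\alpha)}(x)=b_{n}^{(n+1-\alpha)}(x-1)$. The only difference is that the paper simply cites this last identity from Carlitz \cite[Eqs.~(2.11), (2.12)]{car}, whereas you supply a short Lagrange--B\"urmann argument for it; both are acceptable, and otherwise the arguments coincide.
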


\begin{proof}
Setting $x=-q$ and replace $\alpha $ by $\alpha +q$ in (\ref{3}) to get%
\begin{equation}
f\left( \alpha \right) =\left( \alpha +q\right) \binom{\alpha +q+p}{p}%
\underset{j=0}{\overset{p}{\sum }}\left( -1\right) ^{j}\binom{p}{j}\frac{%
f\left( -j-q\right) }{\alpha +q+j}.  \label{4}
\end{equation}%
By setting $f\left( x\right) =b_{n}^{\left( x\right) }\left( -r\right) $ in (%
\ref{4}) we obtain%
\begin{equation*}
b_{n}^{\left( \alpha \right) }\left( -r\right) =\left( \alpha +q\right) 
\binom{\alpha +p+q}{p}\underset{j=0}{\overset{p}{\sum }}\left( -1\right) ^{j}%
\binom{p}{j}\frac{b_{n}^{\left( -j-q\right) }\left( -r\right) }{\alpha +q+j}.
\end{equation*}%
On using (\ref{a6}), the last identity being the first identity of the
proposition.\newline
Upon using the Carlitz's identity $B_{n}^{\left( \alpha \right) }\left(
x\right) =b_{n}^{\left( n+1-\alpha \right) }\left( x-1\right) ,$ see \cite[%
Eqs. (2.11), (2.12)]{car}, the second identity is equivalent to the first
one.
\end{proof}

\noindent For $p=n$ and $q=0$ in Proposition \ref{P1}, we get the following
corollary.

\begin{corollary}
\label{C2}Let $\alpha $ be a real number and $r,$ $n$ be non-negative
integers$.$ We have%
\begin{eqnarray*}
b_{n}^{\left( \alpha \right) }\left( -r\right) &=&\alpha \binom{\alpha +n}{n}%
\binom{2n}{n}^{-1}\underset{j=0}{\overset{n}{\sum }}\frac{\left( -1\right)
^{n+j}}{\alpha +j}\binom{2n}{n+j}\QATOPD[ ] {n+r+j}{r+j}_{r}, \\
&&\  \  \  \  \\
B_{n}^{\left( \alpha \right) }\left( -r\right) &=&\left( n+1-\alpha \right) 
\binom{2n-\alpha +1}{n}\binom{2n}{n}^{-1}\underset{j=0}{\overset{n}{\sum }}%
\frac{\left( -1\right) ^{n+j}}{n+1-\alpha +j}\binom{2n}{n+j}\QATOPD[ ] {%
n+r+j+1}{r+j+1}_{r+1}.
\end{eqnarray*}
\end{corollary}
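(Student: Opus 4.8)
The plan is to derive Corollary~\ref{C2} as the special case $p=n$, $q=0$ of Proposition~\ref{P1}, so the work is purely a matter of simplifying the general formulas under this substitution. First I would take the first identity of Proposition~\ref{P1}, set $q=0$ and $p=n$, and collect the $j$-independent factors. The prefactor becomes $\alpha \binom{\alpha+n}{n}$, the binomial $\binom{p}{j}$ becomes $\binom{n}{j}$, the denominator $\binom{n+j+q}{n}$ becomes $\binom{n+j}{n}$, and the $r$-Stirling symbol becomes $\QATOPD[ ] {n+r+j}{r+j}_{r}$. The one nonroutine simplification is the identity
\begin{equation*}
\binom{n}{j}\binom{n+j}{n}^{-1}=\binom{2n}{n}^{-1}\binom{2n}{n+j},
\end{equation*}
which I would verify by writing both sides as ratios of factorials: the left side is $\dfrac{n!\,n!\,j!}{j!\,(n-j)!\,(n+j)!}=\dfrac{(n!)^2}{(n-j)!\,(n+j)!}$, and the right side is $\dfrac{n!\,n!}{(2n)!}\cdot\dfrac{(2n)!}{(n+j)!\,(n-j)!}=\dfrac{(n!)^2}{(n+j)!\,(n-j)!}$, so they agree. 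Substituting this into the summand and pulling $\binom{2n}{n}^{-1}$ out of the sum yields exactly the first displayed formula of the corollary.

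For the second identity I would do the same substitution $p=n$, $q=0$ in the second formula of Proposition~\ref{P1}. The prefactor $\bigl(n+1-\alpha+q\bigr)\binom{n+1-\alpha+p+q}{p}$ becomes $\bigl(n+1-\alpha\bigr)\binom{2n+1-\alpha}{n}$, the denominator $\binom{n+j+q}{n}$ becomes $\binom{n+j}{n}$, and the $r$-Stirling symbol becomes $\QATOPD[ ] {n+r+j+1}{r+j+1}_{r+1}$; applying the same binomial identity $\binom{n}{j}\binom{n+j}{n}^{-1}=\binom{2n}{n}^{-1}\binom{2n}{n+j}$ and extracting $\binom{2n}{n}^{-1}$ from the sum gives the second displayed formula. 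Alternatively, and perhaps more cleanly, I would simply invoke Carlitz's identity $B_n^{(\alpha)}(x)=b_n^{(n+1-\alpha)}(x-1)$ exactly as in the proof of Proposition~\ref{P1}: the second formula of the corollary is the image of the first formula under $\alpha\mapsto n+1-\alpha$ and $r\mapsto r+1$ (noting $-r-1=(-r)-1$ on the argument side), so once the first identity is established the second is immediate.

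The argument is essentially bookkeeping; I expect no real obstacle beyond being careful that the substitution $q=0$ is legitimate. The only point warranting a remark is that Proposition~\ref{P1} requires $p\ge n$, and here we are taking the boundary case $p=n$, which is allowed; and the factor $\alpha+q+j$ in the denominator becomes $\alpha+j$, which causes no division-by-zero issue because $\alpha$ is an arbitrary real parameter and the identity is one of polynomials in $\alpha$ (any apparent pole at $\alpha=-j$ is removable, the residues cancelling across the sum, which is already the mechanism underlying Melzak's formula~(\ref{3})). Thus I would present the corollary's proof in two sentences: substitute $p=n,\ q=0$ into Proposition~\ref{P1}, and apply the factorial identity above together with, for the second formula, Carlitz's identity.
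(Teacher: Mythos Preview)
Your proposal is correct and follows exactly the paper's approach: the paper simply states that Corollary~\ref{C2} is obtained by setting $p=n$ and $q=0$ in Proposition~\ref{P1}, and your write-up supplies the routine binomial simplification $\binom{n}{j}\binom{n+j}{n}^{-1}=\binom{2n}{n}^{-1}\binom{2n}{n+j}$ that the paper leaves implicit.
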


\noindent In particular, for $\alpha =1\ $the values of the classical
Bernoulli polynomials at non-positive integers are%
\begin{eqnarray*}
b_{n}\left( -r\right) &:&=b_{n}^{\left( 1\right) }\left( -r\right) =\left(
n+1\right) \binom{2n}{n}^{-1}\underset{j=0}{\overset{n}{\sum }}\frac{\left(
-1\right) ^{n+j}}{j+1}\binom{2n}{n+j}\QATOPD[ ] {n+r+j}{r+j}_{r}, \\
&&\  \  \  \  \\
B_{n}\left( -r\right) &:&=B_{n}^{\left( 1\right) }\left( -r\right) =n%
\underset{j=0}{\overset{n}{\sum }}\frac{\left( -1\right) ^{n+j}}{n+j}\binom{%
2n}{n+j}\QATOPD[ ] {n+r+j+1}{r+j+1}_{r+1}.
\end{eqnarray*}%
These representations show that the classical Bernoulli numbers admit the
representations%
\begin{eqnarray*}
b_{n} &=&\left( n+1\right) \binom{2n}{n}^{-1}\underset{j=0}{\overset{n}{\sum 
}}\frac{\left( -1\right) ^{n+j}}{j+1}\binom{2n}{n+j}\QATOPD[ ] {n+j}{j}, \\
&&\  \  \  \  \\
B_{n} &=&n\underset{j=0}{\overset{n}{\sum }}\frac{\left( -1\right) ^{n+j}}{%
n+j}\binom{2n}{n+j}\QATOPD[ ] {n+j+1}{j+1}.
\end{eqnarray*}

\noindent Similarly to Proposition \ref{P1}, the following proposition gives
formulas for the values of the high order Bernoulli polynomials of both
kinds at non-negative integers in terms of the $r$-Stirling numbers of the
second kind.

\begin{proposition}
\label{P2}Let $\alpha $ be a real number and $p,$ $q,$ $r,$ $n$ be
non-negative integers with $p\geq n.$ We have%
\begin{eqnarray*}
B_{n}^{\left( \alpha \right) }\left( r\right) &=&\left( \alpha +q\right) 
\binom{\alpha +q+p}{p}\underset{j=0}{\overset{p}{\sum }}\frac{\left(
-1\right) ^{j}}{\alpha +q+j}\binom{p}{j}\frac{\QATOPD \{ \}
{n+r+q+j}{r+q+j}_{r}}{\binom{n+q+j}{n}}, \\
&&\  \  \  \  \\
b_{n}^{\left( \alpha \right) }\left( r\right) &=&\left( n+1-\alpha +q\right) 
\binom{n+1-\alpha +q+p}{p}\underset{j=0}{\overset{p}{\sum }}\frac{\left(
-1\right) ^{j}}{n+1-\alpha +q+j}\binom{p}{j}\frac{\QATOPD \{ \}
{n+r+q+j+1}{r+q+j+1}_{r+1}}{\binom{n+q+j}{n}}.
\end{eqnarray*}
\end{proposition}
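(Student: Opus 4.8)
The plan is to imitate, almost verbatim, the proof of Proposition \ref{P1}: the only changes are that the Bernoulli polynomials are now evaluated at the non-negative integer $r$ rather than at $-r,$ and that the combinatorial input is the identity (\ref{a5}) for $B_{n}^{\left( -k\right) }\left( r\right) $ instead of (\ref{a6}).

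First I would reuse the specialization (\ref{4}) of Melzak's formula (\ref{3}), namely
\[
f\left( \alpha \right) =\left( \alpha +q\right) \binom{\alpha +q+p}{p}\underset{j=0}{\overset{p}{\sum }}\left( -1\right) ^{j}\binom{p}{j}\frac{f\left( -j-q\right) }{\alpha +q+j},
\]
which holds for any polynomial $f$ of degree $\leq p.$ Then I would choose $f\left( x\right) =B_{n}^{\left( x\right) }\left( r\right) ;$ by the observation made at the beginning of this section (the potential-polynomial argument of \cite[Thm. B, p. 141]{com}) this is a polynomial in the order $x$ of degree $\leq n\leq p,$ so the substitution is legitimate and yields
\[
B_{n}^{\left( \alpha \right) }\left( r\right) =\left( \alpha +q\right) \binom{\alpha +q+p}{p}\underset{j=0}{\overset{p}{\sum }}\left( -1\right) ^{j}\binom{p}{j}\frac{B_{n}^{\left( -j-q\right) }\left( r\right) }{\alpha +q+j}.
\]
Now I would apply (\ref{a5}) with $k=j+q$ (a non-negative integer because $j,q\in \mathbb{N}$), and use $\binom{n+j+q}{j+q}=\binom{n+j+q}{n},$ to replace $B_{n}^{\left( -j-q\right) }\left( r\right) $ by $\binom{n+q+j}{n}^{-1}\QATOPD \{ \} {n+r+q+j}{r+q+j}_{r};$ substituting this into the last display gives the first identity of the proposition.

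For the second identity I would invoke the Carlitz identity $B_{n}^{\left( \alpha \right) }\left( x\right) =b_{n}^{\left( n+1-\alpha \right) }\left( x-1\right) ,$ rewritten as $b_{n}^{\left( \alpha \right) }\left( r\right) =B_{n}^{\left( n+1-\alpha \right) }\left( r+1\right) ,$ and then apply the first identity with $\alpha $ replaced by $n+1-\alpha $ and $r$ replaced by $r+1.$ The shift $r\mapsto r+1$ turns the $r$-Stirling numbers into $\left( r+1\right) $-Stirling numbers with top index $n+r+q+j+1$ and bottom index $r+q+j+1,$ which is precisely the asserted expression.

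I do not expect any genuinely hard step: the proof is purely a matter of plugging the correct polynomial into an already established formula. The two points that require care are (i) justifying that $B_{n}^{\left( x\right) }\left( r\right) $ really is a polynomial of degree $\leq n$ in $x,$ so that Melzak's formula applies for every $p\geq n$ (this is exactly the remark recorded before Proposition \ref{P1}), and (ii) the bookkeeping of the indices of the $r$-Stirling numbers, both when setting $k=j+q$ in (\ref{a5}) and when performing the shift $r\mapsto r+1$ for the second formula.
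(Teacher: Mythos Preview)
Your proposal is correct and follows essentially the same approach as the paper's own proof: apply the specialization (\ref{4}) of Melzak's formula to $f(x)=B_{n}^{(x)}(r)$, insert (\ref{a5}) with $k=j+q$, and then deduce the second identity from the first via the Carlitz relation $b_{n}^{(\alpha)}(x)=B_{n}^{(n+1-\alpha)}(x+1)$. Your additional remarks about the degree of $B_{n}^{(x)}(r)$ in $x$ and the index bookkeeping are just explicit articulations of points the paper leaves implicit.
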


\begin{proof}
By setting $f\left( x\right) =B_{n}^{\left( x\right) }\left( r\right) $ in (%
\ref{4}) we obtain%
\begin{equation*}
B_{n}^{\left( \alpha \right) }\left( r\right) =\left( \alpha +q\right) 
\binom{\alpha +q+p}{p}\underset{j=0}{\overset{p}{\sum }}\left( -1\right) ^{j}%
\binom{p}{j}\frac{B_{n}^{\left( -j-q\right) }\left( r\right) }{\alpha +q+j}.
\end{equation*}%
On using (\ref{a5}), the last identity being the first identity of the
proposition.\newline
Upon using the Carlitz's identity $b_{n}^{\left( \alpha \right) }\left(
x\right) =B_{n}^{\left( n+1-\alpha \right) }\left( x+1\right) ,$ see \cite[%
Eqs. (2.11), (2.12)]{car}, the second identity is equivalent to the first
one.
\end{proof}

\noindent For $p=n$ and $q=0$ in Proposition \ref{P2}, we get the following
corollary.

\begin{corollary}
\label{C3}Let $\alpha $ be a real number and $r,$ $n$ be non-negative
integers$.$ We have%
\begin{eqnarray*}
B_{n}^{\left( \alpha \right) }\left( r\right) &=&\alpha \binom{\alpha +n}{n}%
\binom{2n}{n}^{-1}\underset{j=0}{\overset{n}{\sum }}\frac{\left( -1\right)
^{j}}{\alpha +j}\binom{2n}{n+j}\QATOPD \{ \} {n+r+j}{r+j}_{r}, \\
&&\  \  \  \  \  \\
b_{n}^{\left( \alpha \right) }\left( r\right) &=&\left( n+1-\alpha \right) 
\binom{2n+1-\alpha }{n}\binom{2n}{n}^{-1}\underset{j=0}{\overset{n}{\sum }}%
\frac{\left( -1\right) ^{j}}{n+1-\alpha +j}\binom{2n}{n+j}\QATOPD \{ \}
{n+r+j+1}{r+j+1}_{r+1}.
\end{eqnarray*}
\end{corollary}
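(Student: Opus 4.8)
The plan is simply to specialize Proposition \ref{P2}. Take $p=n$ (legitimate, since the hypothesis there requires only $p\ge n$) and $q=0$ in both identities of that proposition. Then $(\alpha+q)\binom{\alpha+q+p}{p}$ collapses to $\alpha\binom{\alpha+n}{n}$ in the first identity, $(n+1-\alpha+q)\binom{n+1-\alpha+q+p}{p}$ collapses to $(n+1-\alpha)\binom{2n+1-\alpha}{n}$ in the second, and the only remaining task is to simplify the combinatorial prefactor $\binom{p}{j}\big/\binom{n+q+j}{n}$, which with $p=n$ and $q=0$ becomes $\binom{n}{j}\big/\binom{n+j}{n}$.

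First I would rewrite that quotient in terms of factorials,
\[
\frac{\binom{n}{j}}{\binom{n+j}{n}}=\frac{n!/\bigl(j!\,(n-j)!\bigr)}{(n+j)!/\bigl(n!\,j!\bigr)}=\frac{(n!)^{2}}{(n-j)!\,(n+j)!},
\]
and then, using $\binom{2n}{n+j}=\dfrac{(2n)!}{(n+j)!\,(n-j)!}$ together with $\binom{2n}{n}=\dfrac{(2n)!}{(n!)^{2}}$, factor out the $(2n)!$ to obtain
\[
\frac{(n!)^{2}}{(n-j)!\,(n+j)!}=\frac{(n!)^{2}}{(2n)!}\,\binom{2n}{n+j}=\binom{2n}{n}^{-1}\binom{2n}{n+j}.
\]
Substituting this into the two formulas of Proposition \ref{P2} turns $\sum_{j=0}^{p}\frac{(-1)^{j}}{\alpha+q+j}\binom{p}{j}\frac{\QATOPD\{\}{n+r+q+j}{r+q+j}_{r}}{\binom{n+q+j}{n}}$ into $\binom{2n}{n}^{-1}\sum_{j=0}^{n}\frac{(-1)^{j}}{\alpha+j}\binom{2n}{n+j}\QATOPD\{\}{n+r+j}{r+j}_{r}$, and likewise for the second-kind sum, which is exactly the content of the corollary.

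There is no genuine obstacle: once Proposition \ref{P2} is in hand, the corollary is pure bookkeeping — the elementary binomial identity $\binom{n}{j}\big/\binom{n+j}{n}=\binom{2n}{n}^{-1}\binom{2n}{n+j}$ displayed above is the only ingredient, and neither the $r$-Stirling generating functions nor Melzak's formula \eqref{3} are invoked again beyond what is already absorbed into Proposition \ref{P2}.
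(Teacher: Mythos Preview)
Your proposal is correct and matches the paper's own derivation exactly: the corollary is obtained from Proposition~\ref{P2} by setting $p=n$ and $q=0$, and the only computation needed is the binomial simplification $\binom{n}{j}\big/\binom{n+j}{n}=\binom{2n}{n}^{-1}\binom{2n}{n+j}$, which you carry out explicitly while the paper leaves it to the reader.
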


\noindent In particular, for $\alpha =1\ $the values of the classical
Bernoulli polynomials at non-negative integers are%
\begin{eqnarray*}
B_{n}\left( r\right) &:&=B_{n}^{\left( 1\right) }\left( r\right) =\left(
n+1\right) \binom{2n}{n}^{-1}\underset{j=0}{\overset{n}{\sum }}\frac{\left(
-1\right) ^{j}}{j+1}\binom{2n}{n+j}\QATOPD \{ \} {n+r+j}{r+j}_{r}, \\
&&\  \  \  \  \\
b_{n}\left( r\right) &:&=b_{n}^{\left( 1\right) }\left( r\right) =n\underset{%
j=0}{\overset{n}{\sum }}\frac{\left( -1\right) ^{j}}{n+j}\binom{2n}{n+j}%
\QATOPD \{ \} {n+r+j+1}{r+j+1}_{r+1}.
\end{eqnarray*}%
These representations show that the classical Bernoulli numbers admit the
representations%
\begin{eqnarray*}
B_{n} &=&\left( n+1\right) \binom{2n}{n}^{-1}\underset{j=0}{\overset{n}{\sum 
}}\frac{\left( -1\right) ^{j}}{j+1}\binom{2n}{n+j}\QATOPD \{ \} {n+j}{j}, \\
&&\  \  \  \  \\
b_{n} &=&n\underset{j=0}{\overset{n}{\sum }}\frac{\left( -1\right) ^{j}}{n+j}%
\binom{2n}{n+j}\QATOPD \{ \} {n+j+1}{j+1}.
\end{eqnarray*}

\noindent Note that the above formula of Bernoulli numbers $B_{n}$ is
exactly the formula given in \cite[Thm. 3.1]{mut}.

\begin{remark}
The Genocchi numbers $\left( G_{n};n\geq 0\right) $ given by $G_{n}=2\left(
1-2^{2n}\right) B_{2n},$ $n\geq 1,$ see \cite[Proposition 2.1]{her}, can be
written via the above two expressions of $B_{n}$ as%
\begin{eqnarray*}
G_{n} &=&4n\left( 1-2^{2n}\right) \underset{j=0}{\overset{2n}{\sum }}\frac{%
\left( -1\right) ^{j}}{2n+j}\binom{4n}{2n+j}\QATOPD[ ] {2n+j+1}{j+1}, \\
&&\  \  \  \\
G_{n} &=&2\left( 2n+1\right) \left( 1-2^{2n}\right) \binom{4n}{2n}^{-1}%
\underset{j=0}{\overset{2n}{\sum }}\frac{\left( -1\right) ^{j}}{j+1}\binom{4n%
}{2n+j}\QATOPD \{ \} {2n+j}{j}.
\end{eqnarray*}%
Furthermore, the link between the Euler and Bernoulli polynomials via the
identity $E_{n-1}\left( 2x\right) =\frac{2}{n}\left( B_{n}\left( 2x\right)
-2^{n}B_{n}\left( x\right) \right) ,$ see \cite[p. 88]{sri4}, shows that the
values of the Euler polynomials at even integers can be written on using the
above expressions of $B_{n}\left( -r\right) $ and $B_{n}\left( r\right) $ as%
\begin{equation*}
E_{n-1}\left( -2r\right) =\frac{2}{n}\left( B_{n}\left( -2r\right)
-2^{n}B_{n}\left( -r\right) \right) \text{ \ and \ }E_{n-1}\left( 2r\right) =%
\frac{2}{n}\left( B_{n}\left( 2r\right) -2^{n}B_{n}\left( r\right) \right) .
\end{equation*}
\end{remark}

\begin{remark}
It is known that $B_{n}=\underset{j=0}{\overset{n}{\sum }}\left( -1\right)
^{j}\frac{j!}{j+1}\QATOPD \{ \} {n}{j}.$ Similarly of the proof of this
identity, we have%
\begin{equation*}
\underset{n\geq 0}{\sum }B_{n}\left( r\right) \frac{t^{n}}{n!}=\frac{\ln
\left( 1+\exp \left( t\right) -1\right) }{\exp \left( t\right) -1}\exp
\left( rt\right) =\underset{j\geq 0}{\sum }\left( -1\right) ^{j}\frac{j!}{j+1%
}\left( \frac{1}{j!}\left( \exp \left( t\right) -1\right) ^{j}\exp \left(
rt\right) \right)
\end{equation*}%
which gives%
\begin{equation*}
B_{n}\left( r\right) =\underset{j=0}{\overset{n}{\sum }}\left( -1\right) ^{j}%
\frac{j!}{j+1}\QATOPD \{ \} {n+r}{j+r}_{r}.
\end{equation*}
\end{remark}

\section{Identities linked $r$-Stirling numbers and binomial coefficients}

\noindent The above Propositions can be used to deduce relations between the 
$r$-Stirling numbers and binomial coefficients as it is shown by the
following two corollaries..

\begin{corollary}
\label{C1}Let $r,$ $n,$ $p,$ $q,$ $k$ be non-negative integers with $p\geq
n. $ We have%
\begin{eqnarray*}
&&\underset{j=0}{\overset{p}{\sum }}\left( -1\right) ^{j}\binom{j+q}{q}%
\binom{n+k+q+j}{k}\binom{n+k+p+q+1}{p-j}\QATOPD[ ] {n+r+j+q}{r+j+q}_{r} \\
&=&\binom{n+k+q}{q}\underset{j=0}{\overset{n}{\sum }}\left( -1\right) ^{n-j}%
\binom{n+k}{j+k}\QATOPD \{ \} {j+k}{k}\left( r-1\right) ^{n-j}
\end{eqnarray*}%
and%
\begin{eqnarray*}
&&\underset{j=0}{\overset{p}{\sum }}\left( -1\right) ^{j}\binom{j+q}{q}%
\binom{n+k+q+j}{k}\binom{n+k+p+q+1}{p-j}\QATOPD \{ \} {n+r+j+q}{r+j+q}_{r} \\
&=&\binom{n+k+q}{q}\underset{j=0}{\overset{n}{\sum }}\left( -1\right) ^{j}%
\binom{n+k}{j+k}\QATOPD[ ] {j+k}{k}\left( r-1\right) ^{\underline{n-j}}.
\end{eqnarray*}
\end{corollary}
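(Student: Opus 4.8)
The plan is to read each side of the corollary as a single value of the polynomial $\alpha\mapsto b_{n}^{(\alpha)}(-r)$ (respectively $\alpha\mapsto B_{n}^{(\alpha)}(r)$) --- namely its value at $\alpha=n+k+1$, the Carlitz--dual of $\alpha=-k$ --- and to use Melzak's formula (\ref{4}) to collapse the sum on the left. For the first identity I would start by applying (\ref{a6}) with $k$ replaced by $j+q$, which rewrites every $r$-Stirling number on the left as $\QATOPD[ ] {n+r+j+q}{r+j+q}_{r}=(-1)^{n}\binom{n+j+q}{n}\,b_{n}^{(-j-q)}(-r)$. Two elementary bookkeeping facts are then needed. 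The first is the multinomial rearrangement
\[
\binom{j+q}{q}\binom{n+k+q+j}{k}\binom{n+j+q}{n}=\binom{n+k+q}{q}\binom{n+k}{k}\binom{n+k+q+j}{j},
\]
both sides equalling $(n+k+q+j)!/(q!\,k!\,n!\,j!)$. The second is the identity
\[
(\alpha+q)\binom{\alpha+q+p}{p}\frac{1}{\alpha+q+j}\binom{p}{j}=\binom{\alpha+q+j-1}{j}\binom{\alpha+q+p}{p-j}\qquad(0\le j\le p),
\]
obtained by cancelling the factor $\alpha+q+j$ against one of the $p+1$ consecutive factors of $(\alpha+q)\binom{\alpha+q+p}{p}$. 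Specializing $\alpha=n+k+1$ (so that $\alpha+q+p=n+k+p+q+1$), these two facts turn the left-hand side of the corollary into $(-1)^{n}\binom{n+k+q}{q}\binom{n+k}{k}$ times the Melzak sum (\ref{4}) for the polynomial $f(x)=b_{n}^{(x)}(-r)$, whose degree in $x$ is $\le n\le p$. Hence by (\ref{4}) --- equivalently by Proposition~\ref{P1} evaluated at $\alpha=n+k+1$ --- the left-hand side equals $(-1)^{n}\binom{n+k+q}{q}\binom{n+k}{k}\,b_{n}^{(n+k+1)}(-r)$.

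It then remains to expand $b_{n}^{(n+k+1)}(-r)$. Carlitz's identity $b_{n}^{(\alpha)}(x)=B_{n}^{(n+1-\alpha)}(x+1)$ gives $b_{n}^{(n+k+1)}(-r)=B_{n}^{(-k)}(1-r)$; the Appell relation $B_{n}^{(-k)}(x+y)=\sum_{i=0}^{n}\binom{n}{i}B_{i}^{(-k)}(x)\,y^{n-i}$, immediate from (\ref{b1}), at $x=0$ and $y=1-r$ gives $B_{n}^{(-k)}(1-r)=\sum_{i=0}^{n}\binom{n}{i}B_{i}^{(-k)}(0)\,(1-r)^{n-i}$; and (\ref{a5}) at argument $0$ gives $B_{i}^{(-k)}(0)=\binom{i+k}{k}^{-1}\QATOPD \{ \} {i+k}{k}$. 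Substituting, using $\binom{n}{i}\binom{i+k}{k}^{-1}=\binom{n+k}{i+k}\binom{n+k}{k}^{-1}$ so that the factors $\binom{n+k}{k}^{\pm1}$ cancel, and writing $(1-r)^{n-i}=(-1)^{n-i}(r-1)^{n-i}$ so that the remaining powers of $-1$ combine, produces the right-hand side of the corollary.

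The second identity follows by the same argument with the roles of $B$ and $b$ --- and of (\ref{a5}) and (\ref{a6}) --- interchanged: rewrite $\QATOPD \{ \} {n+r+j+q}{r+j+q}_{r}=\binom{n+j+q}{n}\,B_{n}^{(-j-q)}(r)$ via (\ref{a5}), run the same collapse with $f(x)=B_{n}^{(x)}(r)$ to reach $B_{n}^{(n+k+1)}(r)$, apply $B_{n}^{(\alpha)}(x)=b_{n}^{(n+1-\alpha)}(x-1)$ to get $b_{n}^{(-k)}(r-1)$, and expand with the second-kind analogue $b_{n}^{(-k)}(x+y)=\sum_{i=0}^{n}\binom{n}{i}b_{i}^{(-k)}(x)\,y^{\underline{n-i}}$ (immediate from (\ref{b3}), since $(1+t)^{y}=\sum_{m}y^{\underline{m}}t^{m}/m!$) together with $b_{i}^{(-k)}(0)=(-1)^{i}\binom{i+k}{k}^{-1}\QATOPD[ ] {i+k}{k}$ from (\ref{a6}); the falling factorials appearing in this expansion are exactly the source of $(r-1)^{\underline{n-j}}$ in the statement. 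There is no analytic subtlety here --- everything is a polynomial identity in $r$ with finitely many integer parameters --- so the only real work, and the single place where care is genuinely needed, is the binomial and sign bookkeeping that makes the Melzak weights at $\alpha=n+k+1$ coincide with the explicit binomial products written in the corollary.
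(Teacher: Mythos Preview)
Your proof is correct and follows essentially the same route as the paper: specialize $\alpha=n+k+1$ in the Melzak-based identities of Propositions~\ref{P1} and~\ref{P2}, recognize the left-hand side as a constant multiple of $b_{n}^{(n+k+1)}(-r)$ (resp.\ $B_{n}^{(n+k+1)}(r)$), then apply Carlitz duality and expand. The only cosmetic difference is that you obtain the final expansion via the Appell relation together with (\ref{a5})/(\ref{a6}), whereas the paper applies Leibniz's rule directly to the generating function; you also spell out the binomial bookkeeping (the multinomial rearrangement and the factor $(\alpha+q)\binom{\alpha+q+p}{p}\binom{p}{j}/(\alpha+q+j)=\binom{\alpha+q+j-1}{j}\binom{\alpha+q+p}{p-j}$) that the paper leaves implicit.
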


\begin{proof}
For $\alpha =n+k+1$ in the first identities of Propositions \ref{P1} and \ref%
{P2}, the left hand side of the first identity of this corollary is equal to 
$\binom{n+k}{k}b_{n}^{\left( n+k+1\right) }\left( -r\right) $ and the left
hand side of the second identity is equal to $\binom{n+k}{k}B_{n}^{\left(
n+k+1\right) }\left( r\right) \ $and use the fact that:%
\begin{eqnarray*}
B_{n}^{\left( n+k+1\right) }\left( x\right) &=&b_{n}^{\left( -k\right)
}\left( x-1\right) \\
&=&\frac{d^{n}}{dt^{n}}\left( \left( \frac{\ln \left( 1+t\right) }{t}\right)
^{k}\left( 1+t\right) ^{x-1}\right) _{t=0} \\
&=&\frac{n!}{\left( n+k\right) !}\frac{d^{n+k}}{dt^{n+k}}\left. \left(
\left( \ln \left( 1+t\right) \right) ^{k}\left( 1+t\right) ^{x-1}\right)
\right \vert _{t=0} \\
&=&\frac{n!}{\left( n+k\right) !}\left. \underset{j=0}{\overset{n+k}{\sum }}%
\binom{n+k}{j}\frac{d^{j}}{dt^{j}}\left( \ln \left( 1+t\right) \right) ^{k}%
\frac{d^{n+k-j}}{dt^{n+k-j}}\left( 1+t\right) ^{x-1}\right \vert _{t=0} \\
&=&\frac{1}{\binom{n+k}{k}}\underset{j=0}{\overset{n}{\sum }}\left(
-1\right) ^{j}\binom{n+k}{j+k}\QATOPD[ ] {j+k}{k}\left( x-1\right) ^{%
\underline{n-j}}
\end{eqnarray*}%
and%
\begin{eqnarray*}
b_{n}^{\left( n+k+1\right) }\left( x\right) &=&B_{n}^{\left( -k\right)
}\left( x+1\right) \\
&=&\frac{d^{n}}{dt^{n}}\left( \left( \frac{\exp \left( t\right) -1}{t}%
\right) ^{k}\exp \left( \left( x+1\right) t\right) \right) _{t=0} \\
&=&\frac{n!}{\left( n+k\right) !}\left. \underset{j=0}{\overset{n+k}{\sum }}%
\binom{n+k}{j}\frac{d^{j}}{dt^{j}}\left( \exp \left( t\right) -1\right) ^{k}%
\frac{d^{n+k-j}}{dt^{n+k-j}}\left( \exp \left( \left( x+1\right) t\right)
\right) \right \vert _{t=0} \\
&=&\frac{1}{\binom{n+k}{k}}\underset{j=0}{\overset{n}{\sum }}\binom{n+k}{j+k}%
\QATOPD \{ \} {j+k}{k}\left( x+1\right) ^{n-j}.
\end{eqnarray*}
\end{proof}

\noindent Below, we present some particular cases of Corollary \ref{C1}.

\begin{example}
For $r=1$ in Corollary \ref{C1} we get%
\begin{eqnarray*}
&&\underset{j=0}{\overset{p}{\sum }}\left( -1\right) ^{j}\binom{j+q}{q}%
\binom{n+k+q+j}{k}\binom{n+k+p+q+1}{p-j}\QATOPD[ ] {j+n+q+1}{j+q+1} \\
&=&\binom{n+k+q}{q}\QATOPD \{ \} {n+k}{k}, \\
&&\  \  \  \  \  \\
&&\underset{j=0}{\overset{p}{\sum }}\left( -1\right) ^{n-j}\binom{j+q}{q}%
\binom{n+k+q+j}{k}\binom{n+k+p+q+1}{p-j}\QATOPD \{ \} {j+n+q+1}{j+q+1} \\
&=&\binom{n+k+q}{q}\QATOPD[ ] {n+k}{k}.
\end{eqnarray*}
\end{example}

\begin{example}
For $k=0$ in Corollary \ref{C1} we get%
\begin{eqnarray*}
\underset{j=0}{\overset{p}{\sum }}\left( -1\right) ^{n-j}\binom{j+q}{q}%
\binom{n+p+q+1}{p-j}\QATOPD[ ] {n+r+j+q}{r+j+q}_{r} &=&\binom{n+q}{q}\left(
r-1\right) ^{n}, \\
&&\  \  \  \  \\
\underset{j=0}{\overset{p}{\sum }}\left( -1\right) ^{j}\binom{j+q}{q}\binom{%
n+p+q+1}{p-j}\QATOPD \{ \} {n+r+j+q}{r+j+q}_{r} &=&\binom{n+q}{q}\left(
r-1\right) ^{\underline{n}}.
\end{eqnarray*}
\end{example}

\begin{example}
For $n=0$ in Corollary \ref{C1} we get%
\begin{equation*}
\underset{j=0}{\overset{p}{\sum }}\left( -1\right) ^{j}\binom{j+q}{q}\binom{%
k+q+j}{k}\binom{k+q+p+1}{p-j}=\binom{k+q}{q}.
\end{equation*}
\end{example}

\begin{corollary}
\label{C5}Let $r,$ $n,$ $p,$ $q,$ $k$ be non-negative integers with $p\geq n.
$ We have%
\begin{eqnarray*}
&&\underset{j=0}{\overset{p}{\sum }}\left( -1\right) ^{j}\binom{n+p+q+k+1}{%
p-j}\binom{q+j}{j}\binom{q+k+1+j}{k}\QATOPD[ ] {n+r+q+k+1+j}{r+q+k+1+j}_{r}
\\
&=&\left( -1\right) ^{n}\binom{n+p+q+k+1}{n+k}\QATOPD[ ] {n+k+r}{k+r}_{r}
\end{eqnarray*}%
and%
\begin{eqnarray*}
&&\underset{j=0}{\overset{p}{\sum }}\left( -1\right) ^{j}\binom{n+p+q+k+1}{%
p-j}\binom{q+j}{j}\binom{q+k+1+j}{k}\QATOPD \{ \} {n+r+q+k+1+j}{r+q+k+1+j}_{r}
\\
&=&\binom{n+p+q+k+1}{n+k}\QATOPD \{ \} {n+k+r}{k+r}_{r}.
\end{eqnarray*}
\end{corollary}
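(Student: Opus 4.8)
The plan is to follow the proof of Corollary \ref{C1} almost verbatim, making two changes: specialize the order to $\alpha=-k$ instead of $n+k+1$, and replace the free non-negative integer $q$ of Propositions \ref{P1} and \ref{P2} by $q+k+1$. I take $\alpha=-k$ so that the left-hand side of the underlying Melzak relation is an $r$-Stirling number outright, namely $b_{n}^{(-k)}(-r)=(-1)^{n}\binom{n+k}{k}^{-1}\QATOPD[ ] {n+r+k}{k+r}_{r}$ by (\ref{a6}) and $B_{n}^{(-k)}(r)=\binom{n+k}{k}^{-1}\QATOPD \{ \} {n+r+k}{k+r}_{r}$ by (\ref{a5}); thus, in contrast with the proof of Corollary \ref{C1}, no Leibniz expansion is needed. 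The shift $q\mapsto q+k+1$ is exactly what brings the $r$-Stirling numbers produced by those propositions into the form $\QATOPD[ ] {n+r+q+k+1+j}{r+q+k+1+j}_{r}$ (and its curly-bracket analogue) that appears in the statement.

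For the first identity I would start from the first identity of Proposition \ref{P1}, make the substitution $q\mapsto q+k+1$, and then put $\alpha=-k$; this is harmless, since the prefactor $\alpha+q+k+1$ becomes $q+1\neq0$ and every denominator $\alpha+q+k+1+j$ becomes $q+1+j\ge1$. The outcome is
\[
b_{n}^{(-k)}(-r)=(q+1)\binom{p+q+1}{p}\sum_{j=0}^{p}\frac{(-1)^{n+j}}{q+1+j}\binom{p}{j}\binom{n+q+k+1+j}{n}^{-1}\QATOPD[ ] {n+r+q+k+1+j}{r+q+k+1+j}_{r}.
\]
I would then replace $b_{n}^{(-k)}(-r)$ by its value from (\ref{a6}), cancel the common factor $(-1)^{n}$ against the $n$-part of the summand's $(-1)^{n+j}$, and multiply through by $\binom{n+k}{k}\binom{n+p+q+k+1}{n+k}$ to clear all remaining denominators. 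The first identity then follows as soon as one checks that, for every $j$,
\[
\binom{n+k}{k}\binom{n+p+q+k+1}{n+k}(q+1)\binom{p+q+1}{p}\frac{1}{q+1+j}\binom{p}{j}\binom{n+q+k+1+j}{n}^{-1}=\binom{n+p+q+k+1}{p-j}\binom{q+j}{j}\binom{q+k+1+j}{k}.
\]

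For the second identity I would run the identical computation on the first identity of Proposition \ref{P2}, using $B_{n}^{(-k)}(r)=\binom{n+k}{k}^{-1}\QATOPD \{ \} {n+r+k}{k+r}_{r}$ from (\ref{a5}) in place of (\ref{a6}), multiplying again by $\binom{n+k}{k}\binom{n+p+q+k+1}{n+k}$, and appealing to the very same binomial equality. No power of $-1$ is produced here, which matches the sign $(-1)^{j}$ already present in Proposition \ref{P2}.

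The only step that calls for genuine computation, and hence the main (if rather mild) obstacle, is the displayed binomial equality above, together with the careful tracking of the $\pm1$ signs. I would settle it by writing every binomial coefficient as a ratio of factorials: both sides then collapse to one and the same factorial expression, so the equality holds by direct cancellation and no nontrivial combinatorial identity is involved. In short, all the genuine content of the corollary already resides in Propositions \ref{P1} and \ref{P2} and in the evaluations (\ref{a5}) and (\ref{a6}); the remainder is bookkeeping.
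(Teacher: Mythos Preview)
Your proposal is correct and follows essentially the same route as the paper: set $\alpha=-k$ in the first identities of Propositions \ref{P1} and \ref{P2}, use (\ref{a5}) and (\ref{a6}) to evaluate $b_{n}^{(-k)}(-r)$ and $B_{n}^{(-k)}(r)$, and shift $q\mapsto q+k+1$. The only cosmetic difference is the order of operations: the paper takes $\alpha=-k$ first under the temporary assumption $k+1\leq q$ and shifts $q$ at the end, whereas you shift $q$ first so that no such assumption is needed; the resulting computation and the binomial simplification you display are exactly what the paper's proof implicitly uses.
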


\begin{proof}
Choice $\alpha =-k$ with $k+1\leq q$ in the first identities of Propositions %
\ref{P1} and \ref{P2}, the left hand side of the first identity of this
corollary is equal to $\binom{n+k}{k}\binom{n+p+q}{n+k}b_{n}^{\left(
-k\right) }\left( -r\right) $ and the left hand side of the second identity
is equal to $\binom{n+k}{k}\binom{n+p+q}{n+k}B_{n}^{\left( -k\right) }\left(
r\right) \ $and use the identities given by (\ref{a5}) and (\ref{a6}). After
that, replace $q$ by $q+k+1.$
\end{proof}

\begin{example}
For $k=0$ in Corollary \ref{C5} we get%
\begin{eqnarray*}
\underset{j=0}{\overset{p}{\sum }}\left( -1\right) ^{j}\binom{n+p+q+1}{p-j}%
\binom{q+j}{j}\QATOPD[ ] {n+r+q+1+j}{r+q+1+j}_{r} &=&\binom{n+p+q+1}{n}r^{%
\overline{n}}, \\
&&\  \  \  \  \  \\
\underset{j=0}{\overset{p}{\sum }}\left( -1\right) ^{j}\binom{n+p+q+1}{p-j}%
\binom{q+j}{j}\QATOPD \{ \} {n+r+q+1+j}{r+q+1+j}_{r} &=&\binom{n+p+q+1}{n}%
r^{n}.
\end{eqnarray*}
\end{example}

\begin{example}
For $n=0$ in Corollary \ref{C5} we get%
\begin{equation*}
\underset{j=0}{\overset{p}{\sum }}\left( -1\right) ^{j}\binom{p+q+k+1}{p-j}%
\binom{q+j}{q}\binom{q+k+1+j}{k}=\binom{p+q+k+1}{k}.
\end{equation*}
\end{example}

\end{document}